\newtheorem*{algorithm}{Algorithm}
\newtheorem*{proposition}{Proposition}
\theoremstyle{remark}
\newtheorem*{remark}{Remark}
\newcommand{\blist}{\begin{list}{\rom{(\roman{enumi})}}{\setlength
{\leftmarg in}{0em} \setlength{\itemindent}{7ex}
\setlength{\labelsep}{2ex}\setlength{\listparindent}{\parindent}
\usecounter{enumi}}}
\newcommand{\elist}{\end{list}}
\begin{document}
\title[Kazhdan--Lusztig Basis Elements ]{{\large \textbf{Computing Individual Kazhdan--Lusztig Basis Elements}}}
\author{Leonard L. Scott}
\address{Department of Mathematics \\
University of Virginia\\
Charlottesville, VA 22903}
\email{lls2l@virginia.edu {\text{\textrm{(Scott)}}}}
\author{Timothy Sprowl}
\address{9170 Ivy Springs Place\\ Mechanicsville\\ VA 23116.}
\email{tim.spr@gmail.com {\text{\textrm{(Sprowl)}}}}
\thanks{Some of the results of this paper were written in the second author's 2013
thesis for the computer science major at the University of Virginia, reporting
on a project with the first author supported by the National Science Foundation.}
\subjclass{Primary 20G05}

\begin{abstract}
In well-known work, Kazhdan and Lusztig (1979) defined a new set of Hecke
algebra basis elements (actually two such sets) associated to elements in any
Coxeter group. Often these basis elements are computed by a standard recursive
algorithm which, for Coxeter group elements of long length, generally involves
computing most basis elements corresponding to Coxeter group elements of
smaller length. Thus, many calculations simply compute all basis elements
associated to a given length or less, even if the interest is in a specific
Kazhdan-Lusztig basis element. Similar remarks apply to ``parabolic" versions
of these basis elements defined later by Deodhar (1987,1990), though the
lengths involved are the (smaller) lengths of distinguished coset
representatives. We give an algorithm which targets any given Kazhdan-Lusztig
basis element or parabolic analog and does not  precompute any other Kazhdan-Lusztig
basis elements. In particular it does not have to store them.  This results in
a considerable saving in memory usage, enabling new calculations in an
important case (for finite and algebraic group 1-cohomology with irreducible
coefficients) analyzed by Scott-Xi (2010).
\end{abstract}
\maketitle

\section{Introduction}

This note addresses a need we have perceived for a non-recursive algorithm
focused on determining coefficients in Kazhdan--Lusztig polynomials $P_{x,y}$
associated to a single $y$ in a given Coxeter group $W$, or equivalently, to
that of a single Kazhdan--Lusztig Hecke algebra basis element $C_{y}^{\prime}$
in the notation of \cite{KL} or \cite[p.~101]{De2}. Our approach here applies
also to the parabolic Kazhdan--Lusztig polynomials $P_{x,y}^{J}$ and basis
elements $^{J}C_{y}^{\prime}$ (for an appropriate Hecke algebra right module
$M= M^{J}$) in the notation of \cite[p.~113]{De2}. The parabolic notations
are defined only for $y$ ``distinguished" (shortest) in its right coset
$W_{J}y$ in $W$, and there is a similar requirement on $x.$

We follow the notation of \cite{De2} closely. The Hecke algebra of
$W$ is denoted $\mathcal{H}$. It is a free $R$-module, where $R$ is
the ring $\mathbb{Z}[q^{1/2},q^{-1/2}]$, with basis elements
$T_{x},$ $x\in W$, as discussed in \cite[\S 3]{De2}, following
standard terminology. The identity element of $W$ is denoted $e,$
and $T_{e}$ is the identity of the ring $\mathcal{H}$. The set $J$
is a subset of the set $S$ of fundamental generators of $W$ and
serves as a set of fundamental generators of the Coxeter group
$W_{J}$. The set of distinguished right coset representatives of
$W_{J}$ in $W$ is denoted $W^{J}.$ Henceforth, we fix a subset $J,$
which may be the empty set. The module $M=M^{J}$ has a basis
$\{m_{x}\}_{x\in W^{J}}$ with $m_{x}=m_{e}T_{x}$ for $x\in W^{J}$
and $m_{e}T_{w}=q^{\ell(w)}m_{e}$ for $w\in W_{J}$. See the
displayed action \cite[p.~113]{De2} of $\mathcal{H}$ on $M$. We
mention that the cited display corrects an earlier misprint in the
middle term of a  similar display \cite[p.~485]{De1}. We also remark
that the modules considered there and here are ``tensor induced"
from evident rank 1 modules for the Hecke algebra corresponding to
$W_J$. (Though $M$ is a right $\mathcal{H}$-module, the action of
the commutative ring $R$ is often written on the left.) With this
terminology, we have
\begin{equation}
^{J}C_{y}^{\prime}=q^{-\ell(y)/2}\sum_{x\leq y}P_{x,y}^{J}(q)m_{e}
T_{x}\text{\ \ \ }(x,y\in W^{J}).  \tag{*}\label{(*)}
\end{equation}
We will return to this equation later. It is part of \cite[Prop.~5.1(i)]{De2},
the parabolic analog of \cite[(1.1.c)]{KL}. If $s\in S,$ we have $^\emptyset C_s^\prime=C_s^\prime=q^{-1/2}(T_e+T_s).$ When the group $W_J$ is finite, with element $w_J^0$ of maximal length, we have
$P_{x,y}^J=P_{w_{J}^0x,w_{J}^0y}.$ See
\cite[Prop.~3.4]{De1}, applied through the duality set-up of \cite[Rem.~2.6]{De3}. It is worth noting that, even when $W_J$ is finite, the basic
recursion \cite[Prop.~5.2(iii)]{De2}\footnote{The reader may notice there is a
misprint in part (ii) of the same proposition \cite[Prop.~5.2]{De2}, where
$-f^{J}$ should simply be $f$, representing the expression $q^{1/2}
+q^{-1/2}$.  This is irrelevant to the recursion in part (iii).} for the
parabolic Kazhdan--Lusztig polynomials $P_{x,y}^{J}$ is much more effective
than the corresponding non-parabolic ($J=\emptyset$) recursion for computing the
polynomials $P_{w_{J}^{0}x,w_{J}^{0}y}.$ We will call \cite[Prop.~5.2(iii)]{De2}  the {\em Deodhar recursion} (to distinguish it from the more elaborate {\em Deodhar algorithm} we will discuss later). Explicitly, the Deodhar recursion states the following, with $^{J}\mu(z,y)$ denoting the
coefficient of $q^{(\ell(y)-\ell(z)-1)/2}$ in $P_{z,y}^{J}$:
\[
\text{Let }y,ys\in W^{J}\text{ with }s\in S\text{ and }y<ys.\text{ Then }%
^{J}C_{y}^{\prime}C_{s}^{\prime}=\text{ }^{J}C_{ys}^{\prime}+
\!\!\!\!\!\! \sum
_{\substack{z\in W^{J}\\ zs<z\mbox{\small{  or }} zs\notin W^{J}}}^{J}  \!\!\!\!\!
^{J}\mu(z,y)C_{z}^{\prime}.
\]
It makes sense also to call the $J=\emptyset$ case, equivalent to \cite[(2.3b)]{KL}
via \cite[(1.1.1c)]{KL}, the {\em Kazhdan--Lusztig  recursion.}

Next, following \cite[p.~114]{De2}, we define, for each finite sequence
$\mathbf{s}=(s_{1},s_{2},\ldots s_{k})$ of elements of $S$ whose product
$\pi(\mathbf{s})=s_{1}s_{2}\cdots s_{k}$ has length $k$, the element%
\begin{equation}
^{J}D_{\mathbf{s}}^{\prime}=m_{e}C_{s_{1}}^{^{\prime}}C_{s_{2}}^{^{\prime}%
}\cdots C_{s_{k}}^{^{\prime}} . \tag{$^JD_\textbf{s}^\prime$}\label{D'}
\end{equation}
In our algorithm we need to compute a lot of these, but, fortunately
for memory requirements, there is no need to store them. In
\cite[Prop.~5.3(i)]{De2} Deodhar gives closed forms for these
elements, though their calculation involves examining subsequences
of $\mathbf{s}$, an operation potentially of exponential time in
$k.$ We have found the simple iterative computation
$m_{e}C_{s_{1}}^{^{\prime}},$ $m_{e}C_{s_{1}}^{^{\prime}}C_{s_{2}}^{^{\prime}%
}, \ldots,m_{e}C_{s_{1}}^{^{\prime}}C_{s_{2}}^{^{\prime}}\cdots C_{s_{k}%
}^{^{\prime}}$  to be a reasonable computational procedure, running in time
at most proportional to $k^{2}|W^{J}(x)|$ in integer operations, where $W^{J}(x)=\{z\in
W^{J}|z<x\}.$ At any iteration, multiplication by a given $C_{s}^{^{\prime}}$
is easily done with the rules \cite[p.~113]{De2} for multiplication on $M$ by
$T_{s}$ mentioned above. Reformulated versions of these rules, in terms of
multiplication by $C_{s}^{\prime}$, are given below.
\[
m_{x}C_{s}^{^{\prime}}=\left\{
\begin{array}{rl}
q^{1/2}(m_{x}+m_{xs}) & \mbox{if } \ell(xs)<\ell(x) , \\[2mm]
q^{-1/2}(m_{x}+m_{xs}) & \mbox{if } \ell(xs)>\ell(x) \mbox{ and } xs\in W^{J} , \\[2mm]
(q^{1/2}+q^{-1/2})m_{x} & \mbox{if } \ell(xs)>\ell(x) \mbox{ and } xs\notin W^{J}.
\end{array}
\right.
\]
Note also from the definition of $^{J}D_{\mathbf{s}}^{\prime}$ that it is
obtained by applying $Z[q]$-linear combinations of elements $T_{x}\,$, $x\in
W$, to $m_{e}/q^{\ell(y)/2}$ and so is a $Z[q]$-linear combinations\ of
elements $m_{x}/q^{\ell(y)/2}$, $\ x\in W^{J}$. Nonzero terms occur only for
$x\leq y,$ and the coefficient of $m_{y}/q^{\ell(y)/2}$ is the element $1\in
R$.  In our algorithm, it will be useful to write elements of $M$ as $R$-linear
combinations of elements $m_{x}/q^{\ell(x)/2}.$ When this is done for
$^{J}D_{\mathbf{s}}^{\prime}$, we find that any power $q^{n/2}$ which
appears with nonzero coefficient in the (Laurent polynomial) coefficient of
$m_{x}/q^{\ell(x)/2}$ satisfies $n\equiv\ell(y)-\ell(x)$ modulo $2$. This
condition is equivalent to the $Z[q]$-coefficient requirements just noted in
the case of elements $m_{x}/q^{\ell(y)/2}$.

Finally, we need the involution $m\mapsto\overline{m}$ on $M$ from \cite[p.~113]{De2}. It satisfies $\overline{rm}=\overline{r}\overline{m}$, where $r\mapsto$ $\overline{r}$ on the ring $R$ sending $q^{1/2}$ to
$q^{-1/2}.$ Also, $\overline{m_{e}}=m_{e}$ and $\overline{mT_{x}}=\overline
{m}\overline{T_{x}}$, where $\overline{T_{x}}=T_{x^{-1}}^{-1}$ ($m\in M$,
$x\in W$). The fixed point space on $M$ of the involution $m\mapsto
\overline{m}$ is denoted $M^{0}.$ Then, according to \cite[Prop.~5.1(i)]{De2},
for each $y\in W^{J}$ there is a unique element $^{J}C_{y}^{\prime}\in M^{0}$
which satisfies equation (*) above for polynomials $P_{x,y}^{J}(q)$ of degree
at most $(\ell(y)-\ell(x)-1)/2$  when $x<y$ and with $P_{y,y}^{J}(q)=1$ ($x\leq y$ elements of $W^{J}$).  We can give a sharper uniqueness result using \cite[Prop.~5.1(ii)]{De2}, which asserts the elements $^{J}C_{y}^{\prime}$ form a basis of $M^{0}$ over the ring $R^{0}$ of invariants of the involution $r\mapsto\overline{r}$ on $R$.


\begin{proposition}
Put  $t=q^{1/2}$.  Suppose $y\in W^{J}$, and that $^{J}C_{y}^{''}\in M^{0}$ has
the form
\linebreak
$\sum_{x \leq y}p_{x,y}^{J}(t^{-1})(m_{x}/t^{\ell(x)})$,  where  $p_{x,y}^{J} (t^{-1})$ is a
polynomial in $t^{-1}$ with zero constant term whenever $x<y$, and
$p_{y,y}^{J}(t^{-1})=1$. Then $^{J}C_{y}^{''}=$ $^{J}C_{y}^{^{\prime}}$ .
\end{proposition}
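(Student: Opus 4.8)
The plan is to reduce everything to the sharper uniqueness statement quoted just before the Proposition: the elements $\{{}^{J}C_{w}^{\prime}\}_{w\in W^{J}}$ form a basis of $M^{0}$ over the invariant ring $R^{0}$. Since both ${}^{J}C_{y}^{''}$ and ${}^{J}C_{y}^{\prime}$ lie in $M^{0}$, I would set $D:={}^{J}C_{y}^{''}-{}^{J}C_{y}^{\prime}$ and expand $D=\sum_{w\in W^{J}}b_{w}\,{}^{J}C_{w}^{\prime}$ with $b_{w}\in R^{0}$ and only finitely many nonzero (finiteness being automatic from the ``triangularity'' of the ${}^{J}C_{w}^{\prime}$ and the finiteness of Bruhat intervals). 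The goal then becomes showing that every $b_{w}=0$, whence $D=0$.

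The first step is to record the coefficients of $D$ in the normalized basis $\{m_{x}/t^{\ell(x)}\}$. From (*) one has ${}^{J}C_{y}^{\prime}=\sum_{x\leq y}t^{\ell(x)-\ell(y)}P_{x,y}^{J}(q)\,(m_{x}/t^{\ell(x)})$, and Deodhar's degree bound $\deg P_{x,y}^{J}\leq(\ell(y)-\ell(x)-1)/2$ (for $x<y$) makes the coefficient $t^{\ell(x)-\ell(y)}P_{x,y}^{J}(q)$ a polynomial in $t^{-1}$ with zero constant term when $x<y$, equal to $1$ when $x=y$, and $0$ when $x\not\leq y$. The hypothesized expansion of ${}^{J}C_{y}^{''}$ has coefficients $p_{x,y}^{J}(t^{-1})$ of exactly the same qualitative shape. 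Subtracting, I conclude that for every $w\in W^{J}$ the coefficient of $m_{w}/t^{\ell(w)}$ in $D$ is a polynomial in $t^{-1}$ with zero constant term (the cases $w\not\leq y$ and $w=y$ contributing $0$, the case $w<y$ a difference of two such polynomials).

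The second step extracts the $b_{w}$ top-down. If some $b_{w}\neq 0$, choose $w_{0}$ maximal in the Bruhat order among $\{w:b_{w}\neq 0\}$. Since $P_{w_{0},w}^{J}\neq 0$ forces $w_{0}\leq w$, the only term of $\sum_{w}b_{w}\,{}^{J}C_{w}^{\prime}$ contributing to the coefficient of $m_{w_{0}}/t^{\ell(w_{0})}$ is $w=w_{0}$ itself, contributing $b_{w_{0}}P_{w_{0},w_{0}}^{J}(q)=b_{w_{0}}$. Hence $b_{w_{0}}$ equals the $m_{w_{0}}/t^{\ell(w_{0})}$-coefficient of $D$, so $b_{w_{0}}$ is a polynomial in $t^{-1}$ with zero constant term. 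But $b_{w_{0}}\in R^{0}$ is invariant under $q^{1/2}\mapsto q^{-1/2}$, i.e.\ symmetric under $t\leftrightarrow t^{-1}$; and a symmetric Laurent polynomial which is a polynomial in $t^{-1}$ with zero constant term must vanish (equating coefficients of $t^{k}$ and $t^{-k}$ for $k\geq 1$). Thus $b_{w_{0}}=0$, a contradiction, so $D=0$ and ${}^{J}C_{y}^{''}={}^{J}C_{y}^{\prime}$.

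The main point to get right — the ``hard part'', such as it is — is the classical Kazhdan--Lusztig clash between the two normalizations: ``polynomial in $t^{-1}$ with zero constant term'' on the one hand (forced by the hypothesis together with Deodhar's degree bound) and ``$t\leftrightarrow t^{-1}$-invariant'' on the other (forced by membership in $M^{0}$, read off through the $R^{0}$-basis). Making this collision decisive requires using the full $R^{0}$-basis statement \cite[Prop.~5.1(ii)]{De2} rather than merely the existence-and-degree characterization, and it requires the maximality of $w_{0}$ so that no larger $w$ pollutes the relevant coefficient. Everything else is routine bookkeeping with the normalization $m_{x}/t^{\ell(x)}$ and the identity $q^{-\ell(y)/2}=t^{-\ell(y)}$.
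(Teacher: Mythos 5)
Your proof is correct and follows essentially the same route as the paper's: expand in the $R^{0}$-basis $\{{}^{J}C_{w}^{\prime}\}$ from \cite[Prop.~5.1(ii)]{De2}, compare coefficients of $m_{w}/t^{\ell(w)}$ at a Bruhat-maximal offending $w$, and kill the coefficient by the clash between ``only negative powers of $t$'' and $t\leftrightarrow t^{-1}$-symmetry of elements of $R^{0}$. Working with the difference $D$ rather than with ${}^{J}C_{y}^{''}$ directly is only a cosmetic repackaging of the paper's argument (it absorbs the paper's separate step identifying the top coefficient $f_{y}=1$), so no substantive difference.
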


\begin{proof}
Write $^{J}C_{y}^{''}$ as a linear combination of elements $\sum_{z} f_{z}$ $^{J}C_{z}^{^{\prime}}$ with $z\in W^{J}$ and $f_{z}\in R^{0}.$

Comparing coefficients of $m_{z}/t^{\ell(z)},$ we find that any $z$ maximal among those occurring with nonzero $f_{z}$ must be $y,$ and $f_{y}=1.$ Next, suppose some $z<y$ has a nonzero $f_{z}$ and take $z<y$ maximal with
that property. Comparing coefficients of $m_{z}/t^{\ell(z)}$ again, we have
\[
p_{z,y}^{J}(t^{-1})=f_{z}+P_{z,y}^{J}(q)/t^{\ell(y)-\ell(x)}\text{.}%
\]
But both $p_{z,y}^{J}(t^{-1})$ and $P_{z,y}^{J}(q)/t^{\ell(y)-\ell(x)}$ have
nonzero coefficients only for negative powers of $t.$ This property is
inherited by their difference $f_{z}$. However, the element $f_{z}\in R^{0}$
is symmetric with respect to the involution of $R$ interchanging $t$ and
$t^{-1}.$ So it must be that $f_{z}=0\,$, and $^{J}C_{y}^{''}=$ $^{J}%
C_{y}^{^{\prime}}$.
\end{proof}

Continuing with the notation $t=q^{1/2}$, we can now describe our algorithm.
For any element $f(t)$ of $R$, we write $f(t)=f_{\geq0}(t)+f_{<0}(t^{-1})$, where both $f_{\geq0}$, $f_{<0}$ are integer polynomial expressions, and $f_{<0}$ has a zero constant term. Similarly, we let $f_{>0}(t)$ be the
positive degree part of $f_{\geq0}(t).$


\begin{algorithm}
For any given $y\in W^{J}$, we determine $^{J}C_{y}^{\prime}$ as an $R$-linear
combination of the basis elements $m_{x}$ of $M$:   Write $y=s_{1}s_{2}\cdots
s_{k}$ as a reduced product for a sequence $\mathbf{s}=(s_{1},s_{2}
,\ldots,s_{k})$ of elements of $S.$ Introduce a temporary variable $Fat$
$^{J}C_{y}^{\prime}$, initialized to $^{J}D_{\mathbf{s}}^{\prime}$ and
written as a linear combination of the elements $m_{x}/t^{\ell(x)}$, $x\in
W^{J}$.   Next, we  look in $Fat$ $^{J}C_{y}^{^{\prime}}$ for any $x<y$
with a\ Laurent polynomial coefficient $f_{x}(t)$ of $m_{x}/t^{\ell
(x)}$ having a nonzero term of non-negative degree in $t.$ If none are found, then the
algorithm is finished, and $^{J}C_{y}^{^{\prime}}=Fat$ $^{J}C_{y}^{^{\prime}}
$. If one is found, we focus on an $x<y$ of maximal length with such a
coefficient.  Put $f(t)=f_{x}(t),$ and set $g(t)=f_{\geq0}(t)+f_{>0}(t^{-1})$.
  Reassign $Fat$ $^{J}C_{y}^{\prime}$, in terms of its old value,
as $Fat$
$^{J}C_{y}^{^{\prime}}-g(t)^{J}D_{{\mathbf{s}}^{\prime}}^{\prime}$,
where $\mathbf{s}^{\prime}$ is a sequence of elements of $S$ whose
product is reduced and equal to $x$ .  Repeat these reassignments of
$Fat$ $^{J}C_{y}^{\prime}$ until they can no longer be made, or,
equivalently,  $^{J}C_{y}^{^{\prime}}=Fat$$^{J}C_{y}^{^{\prime}}.$
\end{algorithm}

\begin{proof}
We need to show that the algorithm terminates and gives the right answer. It
is fairly clear that the algorithm terminates, since the operations dealing
with a given $x<y$ only affect coefficients of $m_{z}/t^{\ell(z)}$ for
$z\leq x.$ Moreover, they result in $m_{x}/t^{\ell(x)}$ having a Laurent
polynomial coefficient $f_{x}(t)$ with no non-negative powers of $t$, a
coefficient that is undisturbed by later operations with elements in $W^{J}$
smaller than or unrelated to $x$ in the Bruhat--Chevalley order. (In fact, all
operations with $x<y$ of, say, maximal length with respect to having an
offending coefficient $f_{x}(t)$ for $m_{x}/t^{\ell(x)}$, can be done in
parallel.) Eventually, all $x<y$ in $W^{J}$ are exhausted, and the algorithm
terminates. At that point, all coefficients $f_{x}(t)$ for $m_{x}/t^{\ell(x)}$
in $Fat$ $^{J}C_{y}^{^{\prime}}$ have no non-negative powers of $\dot{t}\,$,
while the coefficient $f_{y}(t)=1$ from the initial $Fat$ $^{J}C_{y}^{\prime}$
has remained undisturbed. Thus, the above proposition implies we now have the
desired equation $Fat$ $^{J}C_{y}^{^{\prime}}=\ ^{J}C_{y}^{\prime}$.
\end{proof}


\begin{remark}
We have here used many ingredients of \cite{De2}, and the algorithm
we have obtained above may be viewed, philosophically, as a
variation on the algorithm given in \cite[Algorithm 4.11,
p.~115]{De2}, sometimes called ``Deodhar's algorithm."  Without
going into too many details, our alternative uses the elements
$^{J}D_{s^{\prime}}$ in place of elements $^{J}C_{x}^{^{\prime}}$ in
the reduction process, and the polynomials
$g(t)=f_{\geq0}(t)+f_{>0}(t^{-1})$ are used in place of the positive
coefficient polynomials in $t+t^{-1}$ guaranteed in the
$^{J}C_{x}^{^{\prime}}$  case by \cite[Prop.~3.7, Cor.~5.4]{De2}.
The proposition above makes this work. There are, however, two
advantages of our alternative:  first, unlike \cite[Algorithm 4.11,
p.~115]{De2}, the alternative algorithm does not require an {\em a
priori} positivity condition to guarantee its successful
termination. Second, the alternative algorithm  has considerably
less memory requirements when focused on computing a single
$^{J}C_{y}^{^{\prime}}$, since the recursive calculation of elements
$^{J}C_{x}^{^{\prime}}$ is avoided, together with any associated
storage. The next section gives an illustration in a useful case.
\end{remark}

\section{An Example}

In this section we fix $W$ of affine type $\widetilde{A}_{n}$ with
$S=\{s_{0},$ $s_{1},\ldots,s_{n}\}.$ We suppose the indexing chosen as usual so
that products of successive elements, as well as $s_{n}s_{0}$, have order 3.
Fix $J=\{s_{1},\ldots,s_{n}\},$ so that $W_{J}$ is of type $A_{n}.$ As noted
above, we have the identification $P_{x,y}^{J}=P_{w_{J}^{0}x,w_{J}^{0}y}$ in
this case, for all $x,y\in W^{J}$.  Recall also that $^{J}\mu(x,y)$ denotes
the coefficient of $q^{(\ell(y)-\ell(x)-1)/2}$ in $P_{x,y}^{J}$, so that
$^{J}\mu(x,y)=\mu(w_{0}x,w_{0}y),$ where we have abbreviated $w_{J}
^{0}=w_{0}$.  Let $\varpi_{1},\ldots,\varpi_{n}$ be fundamental weights for a
root system of type $A_{n},$ and denote the integral weight lattice they
generate by $\Lambda$. Write elements $\sum_{i=1}^{n}
a_{i}\varpi_{i}$ of $\Lambda$ as $n$-tuples of integers $(a_{1},\ldots
,a_{n}).$ Let each $s_{i}$ with $0\,<i\leq n$ act on $\Lambda$ by reflection
in the $i^{th}$ fundamental root $\alpha_{i}$ (so that $s_{i}(\varpi
_{i})=\varpi_{i}-\alpha_{i}$ and $s_{i}(\varpi_{j})=\varpi_{j}$ for $j\neq
i$). Let $s_{0}$ act by reflection in the maximal root $\alpha_{0}$, followed
by translation via $-p\alpha_{0},$ where, for the moment, $p$ is just a fixed
positive integer. This gives an affine action of $W$ on $\Lambda,$ which we
next shift to give the standard ``dot" action: Put $\rho=\varpi_{1}%
+\cdots+\varpi_{n}$ and, for $\lambda\in\Lambda$ and $w\in W,$ define
$w\cdot\lambda=w(\lambda+\rho)-\rho$.  To emphasize the dependence of our
notation on $p$, we write $W\cong W_{p},$ viewing the left-hand side as an
abstract Coxeter group, and the right-hand side the group of affine
transformations giving its action on $\Lambda$ (with a recipe partly
involving translations by elements of $p\Lambda$). Assuming $p\geq n+1$, the
weights in $W\cdot-2\rho$ = $W_{p}\cdot-2\rho$ are in 1-1 correspondence with
the elements of $W.$ The dominant weights in $W_{p}\cdot-2\rho$ (those with
non-negative coefficients at each $\varpi_{i}$) are precisely those of the form
$w_{0}x\cdot-2\rho$ with $x\in W^{J}$. This fact is independent of
$p$, though, for fixed $x\in W^{J}$, the precise dominant weight
represented by $w_{0}x\cdot-2\rho$ will generally depend on $p.$ However, if
$w_{0}x\cdot-2\rho$ is $p$-restricted (has all coefficients $a_{i}$ of
fundamental roots in the range $0\leq a_{i}\leq p-1)$ for one choice of $p\geq
n+1,$ it can be shown to be $p$-restricted for any other such choice.

In \cite{ScXi} it is shown that, as $n$ grows, the values $\mu(w_{0}x,w_{0}y)$
for $x,y\in W^{J}$ get arbitrarily large, though they are bounded for fixed
$n.$ This is true even when the associated weights $w_{0}x\cdot-2\rho
,w_{0}y\cdot-2\rho$ are $p-$restricted. Left open was the important case where
$x=1$ was fixed and $n$ and $y$ were allowed to vary (keeping $w_{0}%
y\cdot-2\rho$ $\ p$-restricted). As discussed in \cite{Sc}, this case is
important because the values $\mu(w_{0},w_{0}y)$ give lower bounds on the
dimension of 1-cohomology groups with coefficients in the irreducible modules
$L(w_{0}y\cdot-2\rho)$ of the finite projective special linear groups
PSL$(n+1,q)\ $for $q$ a power of a sufficiently large prime, relevant to a
well-known conjecture of Guralnick.\footnote{In 1984, Guralnick conjectured
that there is a universal constant, call it $C$, such that dim$_{F}$H$^{1}(G,V)\leq C$
whenever $G$ is a finite group acting faithfully and
absolutely irreducibly as $F-$linear automorphisms of a vector space $V$ over a
field $F$ \cite{Gur}. The relevance of Kazhdan-Lusztig polynomials to this
conjecture was demonstrated in \cite{Sc}, showing\ dim$_{F}$ H$^{1}(G,V)$
$\geq\mu(w_{0},w_{0}y)$ for finite groups $G$ of Lie type acting on an
irreducible module\ $V=L(w_{0}y\cdot-2\rho)$. This gave for the first time
dimensions as large as $3.$ This remained the largest known value until the
2012 AIM conference, where computer calculations of Frank L\"ubeck led to
values of $\mu(w_{0},w_{0}y)$ in the hundreds, and to counterexamples to a
related 1961 conjecture of G.~E.~Wall on maximal subgroups. See  \cite{AIM}.
 The Guralnick conjecture, however, is still open, though current efforts
focus on understanding how  dim$_{F}$ H$^{1}(G,V)$ can grow with the rank
of an underlying root system for a finite group of Lie type,  rather than
trying to bound it universally. It is true, that, if the rank is fixed, then
there is a bound depending only on the rank, in either defining or cross
characteristic \cite{CPS09}, \cite{GurTie11}.} However, \cite{ScXi}
 does give a guess, when either $n$ is odd or divisible by $4,$ for a
$p-$restricted weight $w_{0}y\cdot-2\rho$ likely to give a large $\mu
(w_{0},w_{0}y)$. The guess may be described uniformly if we take $p=n+1,$ in
which case the guess reads (for all $n$ not congruent to $2$ modulo $4):$
\[
w_{0}y.-2\rho=(p-2)\rho-\alpha_{0}.
\]
For example, for $n=3,4,5,7,8$ these weights (in $p=n+1$ notation) are
$(2,1,2)$, $(2,3,3,2)$, $(3,4,4,4,3)$, $(4,5,5,5,5,4)$,
$(6,7,7,7,7,7,7,6)$. The corresponding values of $\mu(w_{0},w_{0}y)$ for the
first four had been previously computed, as 1, 2, 3, 469 as part of exhaustive
calculations\footnote{These calculations may be done by hand for $n=3,$ and the
remaining calculations by computer. For $n=4$ they were carried out by Chris
McDowell \cite[Prop.~3]{Sc}. The calculations for $n=7$ were done by Frank L\"ubeck and
confirmed independently by Tim Sprowl. Also, L\"ubeck did a similar exhaustive
calculation for $n=6$, determining a largest value of $16$ for $\mu
(w_{0},w_{0}y)$ for $p-$restricted $\mu(w_{0},w_{0}y),$ after earlier
calculations by Sprowl of values 4 and 5 for smaller weights. Some of these
calculations took place during the June 2012 AIM workshop, and the
remainder a few weeks later. See \cite{AIM}.} including all restricted
weights $w_{0}y\cdot-2\rho.$  We give here, using the algorithm of this
paper, the value of $\mu(w_{0},w_{0}y)$ for the $n=8$ weight $w_{0}%
y\cdot-2\rho=(6,7,7,7,7,7,7,6)$ as 36672. The full Kazhdan--Lusztig polynomial
$P_{w_{0},w_{0}y}$ is given below.\footnote{To be sure, the displayed equation is the result
of a 64-bit calculation, and can only be rigorously claimed to be correct modulo $2^{64}$. Known
theoretical bounds for the coefficients are not particularly good at this point, and even to
accurately pin down the coeffecient of $t\symbol{94}82$ below would require $11\times 64$ bit arithmetic, using
bounds based on \cite[Prop.~7.1]{ParkSt}. Fortunately, however, current interest is in a lower bound for this
coefficient, and all the coefficients are known to be positive.}

$$\begin{array}{l}
+ \ 36672t\symbol{94}82 + 329119t\symbol{94}80 + 1600603t\symbol{94}78 + 5782048t\symbol{94}76 + 17370114t\symbol{94}74 \\[2mm] + 45208788t\symbol{94}72 + 104312889t\symbol{94}70  + 216672871t\symbol{94}68 + 409222372t\symbol{94}66 + 707571983t\symbol{94}64 \\[2mm] + 1125993513t\symbol{94}62 + 1656221777t\symbol{94}60 + 2260164853t\symbol{94}58 + 2871480057t\symbol{94}56 + 3407386353t\symbol{94}54 \\[2mm] + 3787877798t\symbol{94}52 + 3955903667t\symbol{94}50 + 3891194815t\symbol{94}48 + 3613245907t\symbol{94}46 + 3173587791t\symbol{94}44 \\[2mm] + 2640964839t\symbol{94}42 + 2084968629t\symbol{94}40 + 1563002756t\symbol{94}38 + 1113178197t\symbol{94}36 + 753257475t\symbol{94}34 \\[2mm] + 484075798t\symbol{94}32 + 295159975t\symbol{94}30 + 170488857t\symbol{94}28 + 93076435t\symbol{94}26 + 47878089t\symbol{94}24 \\[2mm] + 23109923t\symbol{94}22 + 10411073t\symbol{94}20 + 4347162t\symbol{94}18 + 1667234t\symbol{94}16 + 580355t\symbol{94}14 + 180463t\symbol{94}12 \\[2mm] + 49052t\symbol{94}10 + 11300t\symbol{94}8 + 2107t\symbol{94}6 + 294t\symbol{94}4 + 26t\symbol{94}2 + 1t\symbol{94}0
\end{array}
$$

It would be difficult to make this calculation by using existing recursions
and exhaustively computing all Kazhdan--Lusztig basis elements\ $C_{w_{0}%
z}^{\prime}$ with $z\in W^{J}$ with $z\leq y:$ There are approximately $N=$
$1,700,000$ elements $z\in W^{J}$ with $\ell(z)\leq\ell(y)$ when\ $w_{0}%
y\cdot-2\rho=(6,7,7,7,7,7,7,6)$. Let us crudely estimate that, roughly half of
these elements satisfy $z\leq y,$ and that half the elements $x\in W^{J}$
satisfying $\ell(x)\leq\ell(z)$ also satisfy $x\leq z\,,$ at least when
$\ell(z)$ is modestly large. Comparison with linear orders now leads to a
guess that there are about ($N/2)^{2}/4=N^{2}/16$  such pairs. If we presume
the recursion would at least require knowing some information for every such
pair, recorded as a 32-bit pointer (say) to some small list (possibly evolving) of
further data, such as candidate Kazhdan--Lusztig polynomials, we are led to a
memory requirement of $N^{2}/4$ bytes, or about $\frac{2.89}{4}$ terabytes.
This is a disturbing estimate, to say the least.\footnote{To be sure, the only
information that ``really" needs to be stored is $\mu(w_{0}x,w_{0}z),$ and it
would not be hard to avoid any storage for those pairs $x\leq z$ where
$\mu(w_{0}x,w_{0}z)=0$.  However, even if this were to reduce storage
requirements to an acceptable level, existing recursions do not take the route
of such bare-bones storage. It would, of course, be an interesting project to
see if a new algorithm could be designed which did so, and ran in reasonable
time. The storage proposed is very close to the well-studied notion of a
$W$-graph defined by Kazhdan and Lusztig \cite{KL} It is much easier to
construct Kazhdan--Lusztig polynomials given the $W-$graph, than having to
extract the $\mu$ values from other Kazhdan--Lusztig polynomials as the
construction proceeds.}

By contrast, our original (32-bit) calculation required only 1.2 gigabytes of memory in a
fairly straightforward implementation. It's running time was quite acceptable,
given the task at hand, taking about 15 days using a relatively slow 2.2
gigahertz processor.\footnote{With an easy OpenMP parallelization and a single 8 cpu
computer of the same speed, this running time was cut down to 3.5 days, even with
64-bit arithmetic. This parallel version required
about 10 gigabytes, shared by the cpus, and ``confirmed"
the 32 bit results.} The calculation was carried out entirely by a C++
encoding of the algorithm above  by the second author, and posted on the
first author's webpage www.math.virginia.edu/\symbol{126}lls2l in January
2013. It was reported at the January 2013 AMS meeting, as well as subsequent
lectures in the first half of 2013 by the first author in Perth, Sydney, and
Zhangjiajie (ICRT6).

\end{document}